\documentclass[reqno,12pt]{amsart}

\usepackage{pdfpages}

\usepackage{epsf}
\usepackage{graphics}
\usepackage{graphicx}
\usepackage{amssymb}
\usepackage{amsmath}

\date{}

\theoremstyle{plain}
\newtheorem{theorem}{Theorem}
\newtheorem{corollary}{Corollary}
\newtheorem{proposition}{Proposition}
\newtheorem{lemma}{Lemma}
\newtheorem{rem}{Remark}

\theoremstyle{definition}

\theoremstyle{remark}

\def\N{{\mathbb N}}

\title{Combinatorial cusp count and clover invariants}

\author[Sebastian Baader]{Sebastian Baader}
\address{Mathematisches Institut, Universit\"at Bern, Sidlerstrasse 5, CH-3012 Bern, Switzerland}
\email{sebastian.baader@unibe.ch}

\author[Masaharu Ishikawa]{Masaharu Ishikawa}
\address{Faculty of Economics, Keio University, 4-1-1, Hiyoshi,
Kouhoku, Yokohama, Kanagawa 223-8521, Japan}
\email{ishikawa@keio.jp}

\begin{document}
\begin{abstract} We construct efficient topological cobordisms between torus links and large connected sums of trefoil knots. As an application, we show that the signature invariant $\sigma_\omega$ at $\omega=\zeta_6$ takes essentially minimal values on torus links among all concordance homomorphisms with the same normalisation on the trefoil knot.
\end{abstract}


\maketitle

\section{Introduction}

The topic of this note is motivated by the following question, already studied by Lefschetz~\cite{Lef}: how many simple cusps can a complex plane curve of degree~$d$ have? Here a simple cusp is locally described by the equation $y^2=x^3$. The answer is of order about~$\alpha d^2$, with a constant $\alpha$ known to lie in the interval $(\frac{29}{100},\frac{31}{100})$, as explained in the beautiful overview by Greuel and Shustin~\cite{GS}. Generically, a complex plane curve of degree~$d$ with~$N$ simple cusps gives rise to a smooth cobordism between the link at infinity - a torus link of type $T(d,d)$ - and the connected sum of~$N$ trefoil knots~$3_1$, the knot associated with the simple cusp. We study the following topological analogue of the above question: what is the locally flat topological cobordism of lowest complexity between a torus link of type $T(m,n)$ and the connected sum of~$N$ trefoil knots, denoted by~$3_1^N$? We consider the topological cobordism distance $d_\chi(L,L')$ between two links $L,L' \subset S^3$, defined as the minimal number of $1$-handles of a locally flat topological cobordism $C \subset S^3 \times [0,1]$ between~$L$ and~$L'$, consisting of connected components intersecting both $L$ and $L'$ (not to be confused with the smooth version of the cobordism distance introduced in~\cite{B}). In order to state our main result, we introduce the following variant of the Levine-Tristram signature function $\sigma_\omega(L)$ of a link~$L$ (see~\cite{Lev,T}) at $\omega=e^{\frac{2 \pi i}{6}}$:
$$\sigma_6(L)=\lim_{\epsilon \to 0+} \sigma_{e^{\frac{2 \pi i}{6}+\epsilon}}(L).$$

Unlike $\sigma_{e^{\frac{2 \pi i}{6}}}(L)$, $\sigma_6(L)$ provides a lower bound on the topological $4$-genus of $L$, even if the Alexander polynomial of~$L$ vanishes at $t=e^{\frac{2 \pi i}{6}}$. In particular, we have $\sigma_6(3_1)=2$, an important fact  for our purpose.

\begin{theorem}
\label{trefoil}
There exist constants $a,b,c>0$ with the following property.
For all $m,n,N \in \N$ with $N \geq \frac{7}{24}mn$:
$$|d_\chi(T(m,n),3_1^N)+\sigma_6(T(m,n))-\sigma_6(3_1^N)| \leq am+bn+c.$$
\end{theorem}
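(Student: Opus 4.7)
The statement packages together a lower bound and an upper bound on $d_\chi(T(m,n),3_1^N)$, each matching the signature difference $\sigma_6(3_1^N)-\sigma_6(T(m,n))$ up to a term linear in $m$ and $n$. The plan is to treat the two directions separately: the lower bound follows from the stated property of $\sigma_6$ as a $4$-genus bound, while the upper bound requires an explicit construction of an efficient cobordism, and this is the heart of the proof.

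For the lower direction
\[
d_\chi(T(m,n),3_1^N) \geq \sigma_6(3_1^N)-\sigma_6(T(m,n))-(am+bn+c),
\]
I would start with any locally flat cobordism $C\subset S^3\times[0,1]$ realising $d_\chi$, push each end into a $4$-ball, and cap off every boundary circle on one side with disks to obtain a bounding surface in $B^4$ for the link on the other side. Its first Betti number is bounded by $d_\chi$ plus a linear correction coming from $\mu(T(m,n))+\mu(3_1^N)\leq \gcd(m,n)+1=O(m+n)$. Invoking the stated $4$-genus bound by $\sigma_6$ (which, crucially, does not require the Alexander polynomial of either link to be non-zero at $\zeta_6$) together with additivity under connected sum gives
\[
|\sigma_6(T(m,n))-\sigma_6(3_1^N)|\leq d_\chi(T(m,n),3_1^N)+O(m+n),
\]
which is the desired lower bound on $d_\chi$.

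For the upper bound, the plan is to construct an explicit topological cobordism realising the right-hand side. I would begin with a standard diagram of $T(m,n)$, for instance the closure of the braid $(\sigma_1\sigma_2\cdots\sigma_{m-1})^n$, and attach $1$-handles along a carefully chosen family of bands, each of which either splits off a trefoil summand or simplifies a local region of the diagram. The density $\frac{7}{24}$ in the hypothesis $N\geq\frac{7}{24}mn$ should correspond to the maximal rate at which pairwise disjoint trefoil templates (the \emph{clovers} of the title) can be fitted into the standard torus-link diagram; each clover is converted into a single trefoil summand by a bounded number of local band moves. Once a full clover system contributing about $\frac{7}{24}mn$ summands has been extracted, the residue should be reducible to a trivial link at an additional cost of $O(m+n)$ handles. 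When $N>\frac{7}{24}mn$, further trefoil summands are grafted on at a uniform per-summand cost in handles, so that the total handle count is $\sigma_6(3_1^N)-\sigma_6(T(m,n))+O(m+n)$.

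The main obstacle is the combinatorial clover-packing argument for the upper bound. Exhibiting a concrete system of $\frac{7}{24}mn$ pairwise disjoint clovers inside the braid of $T(m,n)$, and then verifying that the associated sequence of band surgeries produces a cobordism whose $1$-handle count matches the target, is where most of the work lies. Matching this count against the precise asymptotic behaviour of $\sigma_6(T(m,n))$ (a lattice-point count coming from the right-limit of $\sigma_\omega$ at $\zeta_6$) is what pins down the constant $\frac{7}{24}$ and forces optimality up to the linear error term supplied by the lower bound.
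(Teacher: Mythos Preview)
Your lower bound is fine in spirit, though the paper obtains it more directly: the limit invariant $\sigma_6$ satisfies $|\sigma_6(L_1)-\sigma_6(L_2)|\leq d_\chi(L_1,L_2)$ on the nose (via Powell's result for non-algebraic $\omega$ and passing to the limit), so no correction term in the number of components is needed.

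The upper bound is where your proposal has a genuine gap. You outline a direct ``clover-packing'' scheme---locate $\tfrac{7}{24}mn$ disjoint trefoil templates in the standard braid diagram of $T(m,n)$, split them off, and reduce the residue---but you do not carry any of this out, and there is no evidence that such a packing exists with the required density or that the residue is $O(m+n)$-trivial. In particular, your reading of $\tfrac{7}{24}$ as a maximal packing density is not what is going on: in the paper the threshold $\tfrac{7}{24}mn=\tfrac{21}{2}kl$ (for $m=6k$, $n=6l$) arises as a \emph{sum} of two separate contributions, $\tfrac{1}{2}kl$ and $10kl$, coming from two completely different constructions.

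The paper's actual route is a two-stage cabling argument you have not anticipated. First (Proposition~\ref{sixstrand}), the six-strand case $d_\chi(T(6,M),3_1^n)$ is computed up to bounded error by viewing $T(6,6k)$ as a $2$-cable of $T(3,3k)$, using a special $3$-braid word for the core, and then applying a sequence of $t_3$-moves governed by an algebraic identity in $B_4$ (Lemma~\ref{fourstrand}: $a^{-3}c^{-3}(abc)^{12}$ reduces to the trivial braid by ten negative $t_3$-moves). Second (Lemma~\ref{twisting}), the general link $T(6k,6l)$ is viewed as a $6$-cable of $T(k,l)$, and McCoy's null-homologous twisting bound is used to relate it to $T(6,6kl)\#3_1^t$ at cost $2t+10$, where $t\geq\tfrac{1}{2}(k-1)(l-1)$ is the unknotting number of the core. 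Concatenating these two cobordisms and keeping track of the signature yields the theorem. Neither the $t_3$-move reduction in $B_4$ nor the appeal to McCoy's twisting method appears in your plan, and these are the load-bearing ideas.
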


The value of $\sigma_6(T(m,n))$ is easy to extract from the work of Gambaudo and Ghys on the signature function on braid groups. Indeed, Proposition~5.2 in~\cite{GG} implies that the function $n \mapsto \sigma_6(T(m,n))$ is a quasimorphism of slope $\frac{5}{18}$, provided~$m$ is divisible by~$6$. This implies $\sigma_6(T(m,n)) \approx \frac{5}{18}mn$, up to an affine error in $m$ and $n$, for all $m,n \in \N$. This fact has an important consequence concerning a large class of concordance invariants. We define a clover invariant to be an additive link invariant~$\rho$ with the following two properties:
\begin{enumerate}
\item[(i)] $\rho(3_1)=2$,
\item[(ii)] $|\rho(L_1)-\rho(L_2)| \leq d_\chi(L_1,L_2)$, for all links~$L_1,L_2$.
\end{enumerate}
The second item implies $|\rho(K)| \leq 2g_4(K)$ for all knots~$K$, where $g_4(K)=\frac{1}{2}d_\chi(K,O)$ denotes the (locally flat) topological $4$-genus of~$K$, i.e. half the cobordism distance between~$K$ and the trivial knot~$O$.
As a consequence, $\rho$ vanishes on topologically slice knots.  Moreover, additivity implies that $\rho$ is a topological concordance invariant. An important family of clover invariants is given by the Levine-Tristram signature invariants $\sigma_{e^{2 \pi i \theta}}$ associated with $\theta \in (\frac{1}{6},\frac{1}{2}]$, and the limit invariant $\sigma_6$ defined above.

\begin{corollary}
\label{clover}
There exist constants $A,B,C>0$, so that the following inequality holds for all clover invariants $\rho$, and for all $m,n \in \N$:
$$\rho(T(m,n)) \geq \frac{5}{18}mn-Am-Bn-C.$$
\end{corollary}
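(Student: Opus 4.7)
The plan is to chain together three ingredients: Theorem~\ref{trefoil}, the defining properties (i) and (ii) of a clover invariant, and the Gambaudo--Ghys asymptotic $\sigma_6(T(m,n)) \approx \tfrac{5}{18}mn$ mentioned just after the theorem. The main point is that the stated comparison between $d_\chi(T(m,n),3_1^N)$ and $\sigma_6(T(m,n))-\sigma_6(3_1^N)$ transfers, essentially for free, to any invariant $\rho$ that agrees with $\sigma_6$ on the trefoil and satisfies the cobordism-distance bound (ii).

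First I would fix $m,n \in \N$ and choose $N=\lceil \tfrac{7}{24}mn\rceil$, so that the hypothesis $N \geq \tfrac{7}{24}mn$ of Theorem~\ref{trefoil} is met. Additivity of $\rho$ together with property~(i) yields $\rho(3_1^N)=2N$, while the same properties applied to $\sigma_6$ (which is itself a clover invariant with $\sigma_6(3_1)=2$) give $\sigma_6(3_1^N)=2N$. Property~(ii) then says
\[
\rho(T(m,n)) \;\geq\; \rho(3_1^N) - d_\chi(T(m,n),3_1^N) \;=\; 2N - d_\chi(T(m,n),3_1^N).
\]

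Next I would invoke Theorem~\ref{trefoil}, which (discarding absolute values on one side) provides constants $a,b,c>0$ with
\[
d_\chi(T(m,n),3_1^N) \;\leq\; \sigma_6(3_1^N) - \sigma_6(T(m,n)) + am+bn+c \;=\; 2N - \sigma_6(T(m,n)) + am+bn+c.
\]
Substituting this into the previous inequality, the $2N$ terms cancel and I obtain
\[
\rho(T(m,n)) \;\geq\; \sigma_6(T(m,n)) - am - bn - c.
\]

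Finally I would feed in the Gambaudo--Ghys estimate: the quasimorphism property of $n \mapsto \sigma_6(T(m,n))$ yields constants $\alpha,\beta,\gamma>0$ with $\sigma_6(T(m,n)) \geq \tfrac{5}{18}mn - \alpha m - \beta n - \gamma$ for all $m,n \in \N$. Combining the two bounds and setting $A=a+\alpha$, $B=b+\beta$, $C=c+\gamma$ gives the claimed estimate. I do not expect any serious obstacle here once Theorem~\ref{trefoil} is in hand; the only mild care is in producing a uniform affine lower bound for $\sigma_6(T(m,n))$ away from the divisibility hypothesis $6 \mid m$ of the Gambaudo--Ghys statement, which is already asserted in the paragraph following the theorem and so may be cited directly.
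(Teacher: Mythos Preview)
Your proposal is correct and follows essentially the same route as the paper's own proof: apply property~(ii) to $T(m,n)$ and $3_1^N$, bound $d_\chi$ via Theorem~\ref{trefoil}, cancel the $2N$ terms using $\rho(3_1^N)=\sigma_6(3_1^N)=2N$, and finish with the Gambaudo--Ghys estimate for $\sigma_6(T(m,n))$. The only cosmetic difference is that you fix a specific $N=\lceil \tfrac{7}{24}mn\rceil$ while the paper leaves $N$ generic, which is immaterial.
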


The discussion after Theorem~\ref{trefoil} shows that the quadratic part of the lower bound, $\frac{5}{18}mn$, is sharp, since $\rho=\sigma_6$ is a clover invariant. In summary, the restriction of the invariant $\rho=\sigma_6$ to torus links is essentially dominated by every clover invariant.

It is easy to extract explicit values for the constants appearing in Theorem~\ref{trefoil} and Corollary~\ref{clover}. A careful inspection of the proofs shows that the constants $a,b$ and $A,B$ can be chosen to be about $20$, while $c$ and $C$ can be chosen to be about $200$. 

The proof of Theorem~\ref{trefoil} consists of two major steps, which we present in the following two sections. First, a rather involved construction of minimal cobordisms between $6$-strand torus links and large connected sums of trefoil knots. This is motivated by a result on the cobordism distance between closed positive $3$-braids and connected sums of trefoil knots~\cite{BR}. Second, a cabling construction which yields almost minimal cobordisms between general torus links and large connected sums of trefoil knots. The second step makes essential use of McCoy's twisting method~\cite{MC}. The proof of Corollary~\ref{clover} is short and simple; we present it in the last section.

\section*{Acknowledgements}

The first author is grateful to the Keio University for providing an excellent research environment during his stay in Tokyo. He also thanks Peter Feller for enlightening discussions on cusps in algebraic curves. The second author is supported by JSPS KAKENHI Grant numbers JP23K03098 and JP23H00081.

\section{Torus links with $6$ strands}

In this section we derive an almost precise expression for the topological cobordism distance between $6$-strand torus links and large connected sums of trefoil knots. Here and throughout this paper, we make use of the fact that the cobordism distance $d_\chi(L_1,L_2)$ is bounded below by the difference $|\sigma_6(L_1)-\sigma_6(L_2)|$. This is true, since $\sigma_6$ is a limit of Levine-Tristram signature invariants $\sigma_\omega$, and the lower bound holds for all $\sigma_\omega$ associated with non-algebraic numbers $\omega \in S^1$~\cite{P}.

\begin{proposition}
\label{sixstrand}
For all $m,n \in \N$ with $n \geq \frac{5}{3}m$:
$$d_\chi(T(6,m),3_1^n)=\sigma_6(3_1^n)-\sigma_6(T(6,m))+E(m,n),$$
where $E(m,n)$ is a globally bounded error term.
\end{proposition}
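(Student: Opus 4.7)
The lower bound is immediate from the inequality $d_\chi(L_1,L_2) \ge |\sigma_6(L_1)-\sigma_6(L_2)|$ recalled at the start of the section. Since $\sigma_6(3_1^n) = 2n$ and, by the Gambaudo--Ghys estimate cited in the introduction, $\sigma_6(T(6,m)) = \tfrac{5}{3} m + O(1)$, the hypothesis $n \geq \tfrac{5}{3} m$ makes the difference $\sigma_6(3_1^n) - \sigma_6(T(6,m))$ non-negative up to a uniformly bounded correction that can be absorbed into $E(m,n)$. Thus only the matching upper bound requires genuine work.

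For the upper bound the plan is to exhibit an explicit locally flat cobordism from $T(6,m)$ to $3_1^n$ using approximately $2n - \sigma_6(T(6,m))$ 1-handles. Writing $T(6,m)$ as the closure of the positive 6-braid $(\sigma_1\sigma_2\sigma_3\sigma_4\sigma_5)^m$, I would partition this word into a sequence of blocks of uniformly bounded braid length. Within each block I would attach a fixed local pattern of bands --- the 6-strand analogue of the construction of Baader--Ruppik on closed positive 3-braids \cite{BR} --- designed so that after the band attachments the block collapses, up to isotopy, to a connected sum of a fixed number of trefoils. Stacking these per-block cobordisms would yield a cobordism from $T(6,m)$ to $3_1^{n_0}$ for some explicit $n_0 = \tfrac{5}{3} m + O(1)$, using $2n_0 - \sigma_6(T(6,m)) + O(1)$ 1-handles in total.

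It remains to reconcile $n_0$ with the prescribed $n$. When $n \geq n_0$, I would adjoin $n-n_0$ extra trefoil summands at the sharp cost of two 1-handles each, by connect-summing into the existing cobordism the standard genus-one saddle cobordism from the unknot to $3_1$; this matches the signature lower bound $d_\chi(3_1^{n_0}, 3_1^n) \geq 2(n-n_0)$ exactly. When $n < n_0$, the hypothesis $n \geq \tfrac{5}{3}m$ combined with $n_0 = \tfrac{5}{3}m + O(1)$ forces $n_0 - n = O(1)$, so the symmetric removal of trefoil summands contributes only $O(1)$ extra 1-handles. In either case the total count is $2n - \sigma_6(T(6,m)) + O(1)$, as required. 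The principal obstacle is clearly the block-level construction: one must exhibit the local band pattern explicitly and verify by direct isotopy (or by recognising the local link type) that each block indeed collapses to the claimed connected sum of trefoils with exactly the right number of bands, while controlling the accumulated boundary effects between consecutive blocks so that the global error $E(m,n)$ remains bounded.
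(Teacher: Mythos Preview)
Your outline has the right shape: the lower bound is indeed immediate, and the reduction from a general $n$ to a specific $n_0 \approx \tfrac{5}{3}m$ by adding or removing trefoil summands at cost two each is exactly what the paper does. The gap is the block-level construction you explicitly defer, and here your sketch neither supplies the missing idea nor matches the route the paper actually takes.

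The paper does \emph{not} partition the standard word $(\sigma_1\cdots\sigma_5)^m$ into blocks. After reducing to $m=6k$, it views $T(6,6k)$ as the $2$-cable of $T(3,3k)$ and invokes a special, non-standard positive $3$-braid representative of $T(3,3k)$ from~\cite{BBL}. Cabling that word and redistributing the framing produces a $6$-braid whose repeating unit is $dced(bacb)^5a^3c^3$. The decisive algebraic input is Lemma~\ref{fourstrand}: the $4$-braid $a^{-3}c^{-3}(abc)^{12}$ admits a reduction to the trivial braid by exactly ten negative $t_3$-moves, each trading three like crossings for one trefoil summand at the cost of a single saddle. Applying this inside each unit converts the bulk of the braid into trefoil summands together with a residual closure $\hat\alpha$, where $\alpha=(dced(bacb)^{-1})^{2l-2}$. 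A final bounded sequence of saddles splits $\hat\alpha$ as $L\#L$ with $L$ amphichiral, hence smoothly slice; this concordance step is essential and is not a pure band move.

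Your phrase ``the $6$-strand analogue of the construction of~\cite{BR}'' names the problem rather than solving it: that analogue is precisely what has to be invented, and blocks of the standard torus-braid word do not visibly admit an efficient $t_3$-reduction. The paper's detour through the $2$-cable viewpoint, the BBL word, and Lemma~\ref{fourstrand} is what makes the count come out to $2n-\tfrac{5}{3}m+O(1)$. Without a concrete local band pattern and its verification, and without accounting for a residual piece that may only be disposed of by concordance rather than by saddles, your proposal remains a plausible plan but not a proof.
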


A direct application of Proposition~5.2 (for $\theta=\frac{1}{6}$) and Remark~1 in~\cite{GG} shows
$\sigma_6(T(6,m))=\frac{5}{3}m+E(m)$, where $E(m) \leq 12$. Therefore, in order to prove Proposition~\ref{sixstrand}, we need to construct a connected cobordism with Euler characteristic of absolute value about $2n-\frac{5}{3}m$ between the two links $T(6,m)$ and $3_1^n$. This cobordism will in fact be a sequence of smooth saddle moves and smooth concordances, so that Proposition~\ref{sixstrand} remains true in the smooth category.

As a preparation, we derive an algebraic statement about the third power of the central element $(abc)^{4}$ in the braid group~$B_4$. Here, for simplicity, we denote the standard generators of $B_4$ by $a,b,c$ instead of the commonly used $\sigma_1,\sigma_2,\sigma_3$. Let $\alpha,\beta \in B_4$ be braids represented by words in the generators $a,b,c$. We say that $\beta$ is related to $\alpha$ by a negative $t_3$-move, if $\alpha$ is obtained from $\beta$ by removing the third power of any of the standard generators, anywhere in the braid word $\beta$. As observed in~\cite{BR} (Lemma~1), the link $\hat{\beta}$ and the connected sum of links $\hat{\alpha} \# 3_1$ are then related by a single saddle move, in particular
$$d_\chi(\hat{\beta},\hat{\alpha} \# 3_1)=1.$$

\begin{lemma}
\label{fourstrand}
The braid $\beta=a^{-3}c^{-3}(abc)^{12} \in B_4$ can be transformed into the trivial braid by a sequence of $10$ negative $t_3$-moves.
\end{lemma}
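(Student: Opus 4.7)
The plan is to prove Lemma~\ref{fourstrand} by producing an explicit sequence of ten negative $t_3$-moves (interleaved with braid relations in $B_4$) that reduces $\beta = a^{-3}c^{-3}(abc)^{12}$ to the trivial braid. The number ten is forced by exponent sum: $\beta$ has exponent sum $-6+36=30$, each $t_3$-move drops this by three, and the trivial braid has exponent sum zero. Moreover, the prefix $a^{-3}c^{-3}$ is \emph{invisible} to $t_3$-moves (which remove only positive subwords $\sigma_i^3$), so after the ten deletions the six surviving positive letters must represent $c^3a^3=a^3c^3$ as an element of $B_4$, in order to cancel the prefix.

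Two structural ingredients drive the construction. First, $(abc)^4=\Delta^2$ is the full twist in $B_4$, hence central, and conjugation by $\Delta$ swaps $a\leftrightarrow c$ while fixing $b$; in particular $(abc)^{12}=\Delta^6$ and $a^{-3}c^{-3}$ commutes with every power of $\Delta$. Second, the commutation $ac=ca$ yields the inductive identity
\[
(abc)^n \;=\; ab\,(acb)^{n-1}\,c \qquad (n\ge 1),
\]
and hence
\[
\beta \;=\; a^{-3}c^{-3}\,ab\,(acb)^{11}\,c.
\]
All ten $t_3$-moves must therefore come from the block $(acb)^{11}$. To locate them, I would repeatedly apply the remaining braid relations $aba=bab$ and $bcb=cbc$ to migrate letters through the word until a subword of the form $a^3$, $b^3$, or $c^3$ becomes visible; a single $t_3$-move then shortens the word by three letters, after which the procedure is iterated on the result.

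The main obstacle is combinatorial bookkeeping rather than anything conceptual: at each of the ten stages one must choose braid moves that expose the next cube without destroying the availability of subsequent cubes, and the final six positive letters must assemble into $a^3c^3$ in $B_4$. The budget is tight enough that the sequence is essentially forced, but exhibiting it amounts to writing out the ten successive intermediate words, each annotated with the braid relations applied and the cube removed, and verifying the resulting chain of equalities in $B_4$. I expect the proof to take exactly this form --- a table or diagram of intermediate states --- with the verification of each equality being routine once the sequence is in hand, while the discovery of the correct sequence is the real content of the proof.
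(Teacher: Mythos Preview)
Your proposal correctly frames the problem---the exponent sum forces exactly ten moves, and the surviving positive word must equal $a^3c^3$ in $B_4$---but it is a plan for a proof rather than a proof. You explicitly say that ``the discovery of the correct sequence is the real content of the proof,'' and you have not discovered it. Your rewriting $(abc)^{12}=ab\,(acb)^{11}\,c$ exposes no cube at all, and you give no indication of which braid relations produce the first one, let alone a mechanism for iterating to the tenth. Saying the verification is ``routine once the sequence is in hand'' is true but empty: the sequence \emph{is} the lemma.

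The paper's argument pivots on a different and much more productive identity,
\[
(abc)^{12} \;=\; (a^2cb\,a^3\,cb)^4,
\]
which already displays four copies of $a^3$. Removing them yields $(a^2(cb)^2)^4$, which the paper rewrites as $c^2(a^2b\,c^3)^3a^2bc$, exposing three copies of $c^3$; removing those and invoking the $B_3$-identity $(a^2b)^4=(a^3b)^3$ gives $c^2(a^3b)^3c$, from which the final three cubes are read off, leaving $c^3a^3=a^3c^3$. The substance of the proof is this chain of nontrivial braid-word identities; your decomposition via $(acb)^{11}$ does not suggest any of them, and there is no visible route from it to the endgame without essentially rediscovering the paper's starting identity or an equivalent one.
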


The proof just below also implies the following, more natural, statement, which was already known to Coxeter~\cite{Cox}: the braid $(abc)^{12} \in B_4$ can be transformed into the trivial braid by a sequence of $12$ negative $t_3$-moves. However, we will need the more specific formulation of Lemma~\ref{fourstrand} in the proof of Proposition~\ref{sixstrand}.

\begin{proof}[Proof of Lemma~\ref{fourstrand}]
We use the following algebraic identity, which is a variation of the well-known equality $(abc)^{12}=(a^2cb)^9$ in $B_4$ stated in~\cite{Cox}:
$$(abc)^{12}=(a^2cba^3cb)^4=\gamma.$$
Figure~1 shows an isotopy between the braid $(a^2cba^3cb)^4$ and a $4$-braid which is easy to identify as the third power of a full twist on four strands, i.e. 
$(abc)^{12}$.
\begin{figure}[htbp]
\includegraphics[scale=1.1, bb=155 589 437 713]{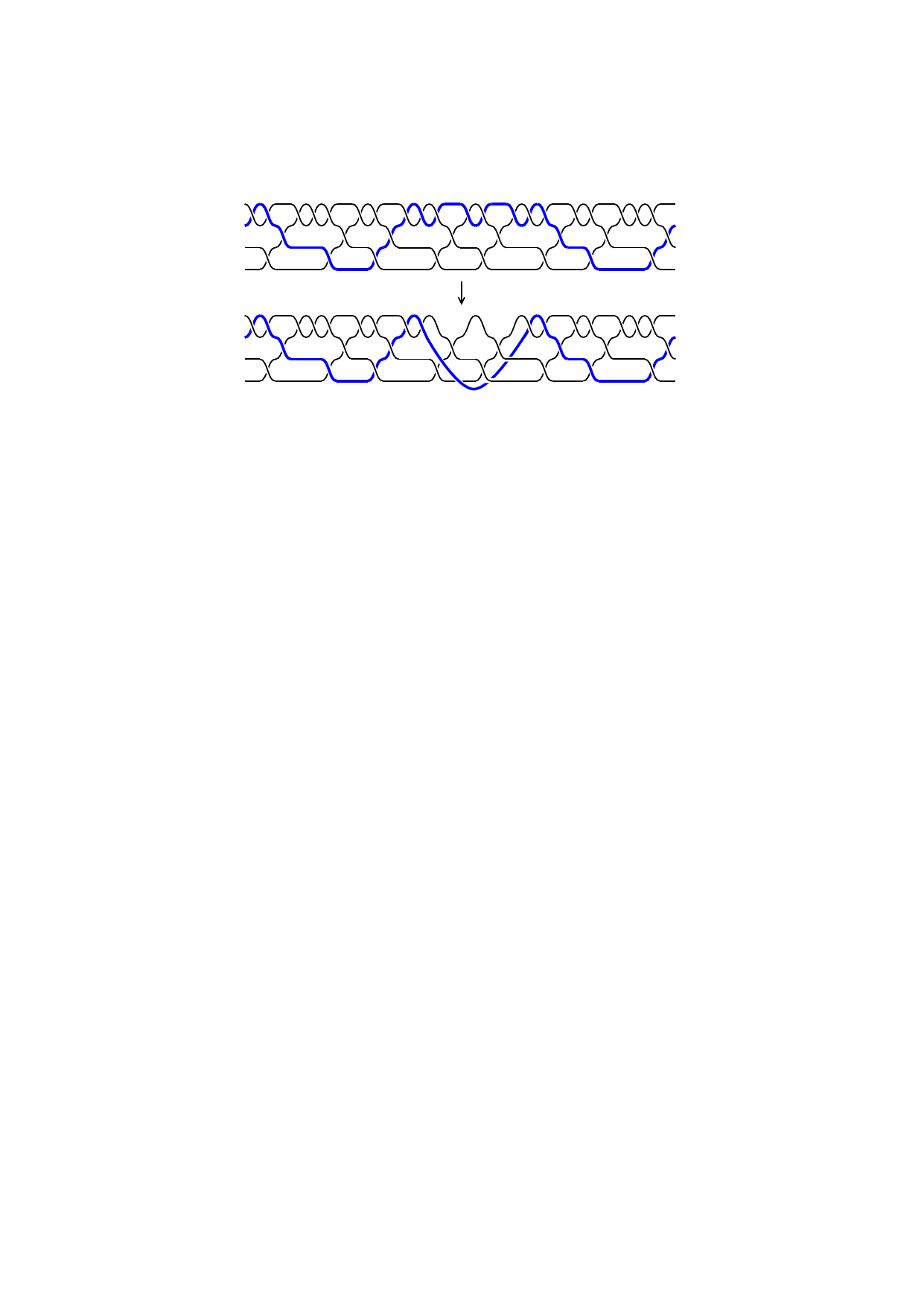}
\caption{$(a^2cba^3cb)^4=(abc)^{12}$}
\label{figure1}
\end{figure}
After applying $4$ negative $t_3$-moves to~$\gamma$, we obtain the braid
$$(a^2(cb)^2)^4=c^2(a^2bc^3)^3a^2bc.$$
Another $3$ negative $t_3$-moves transform the latter into 
$$c^2(a^2b)^3a^2bc=c^2(a^3b)^3c=\delta.$$
Here we use the identity $(a^2b)^4=(a^3b)^3$.
Another $3$ negative $t_3$-moves (removing the second and third instance of $a^3$, then $b^3$) transform $\delta$ into $c^2a^3c=c^3a^3$.
We have just seen that the positive braid $(abc)^{12}$ can be transformed into the positive braid $c^3a^3$ by a sequence of $4+3+3=10$ negative $t_3$-moves. Therefore, the braid $\beta=a^{-3}c^{-3}(abc)^{12} \in B_4$ can be transformed into the trivial braid by a sequence of $10$ negative $t_3$-moves.
\end{proof}

\begin{proof}[Proof of Proposition~\ref{sixstrand}]
We may assume $m=6k$, since every positive $6$-strand torus link is related to $T(6,6k)$ by a sequence of at most $15$ saddle moves, thus by a smooth cobordism of Euler characteristic at most $15$. This operation does not change the value $\sigma_6(T(6,m))$ by more than $15$. Furthermore, we need only consider the case $n=10k$, for the following reason:
for all $n'>n$,
$$d_\chi(3_1^{n'},3_1^{n})=2(n'-n)=\sigma_6(3_1^{n'})-\sigma_6(3_1^{n}).$$
Indeed, the two knots $3_1^{n}$, $3_1^{n'}$ are related by $n'-n$ crossing changes, thus by a smooth cobordism of Euler characteristic $2(n'-n)$. 
In the first step, we construct a smooth cobordism of small Euler characteristic between the link $T(6,6k)$ and the closure of the braid
$$(dced(bacb)^5a^3c^3)^{k-3},$$
where $a,b,c,d,e$ denote the standard generators of the braid group $B_6$.
For this, we view $T(6,6k)$ as a $2$-cable of $T(3,3k)$. In~\cite{BBL}, a special positive braid representing the link $T(3,3k)$ is derived, which depends on the parity of~$k$. We only present the odd case $k=2l+1$ here; the even one is virtually the same. The link $T(3,6l+3)$ is isotopic to the closure of the $3$-braid
$$(ba^4ba^3(ba^5)^{l-1})^2.$$
By replacing $a,b \in B_3$ by $bacb,dced\in B_6$, respectively, and introducing the correct framing of the $2$-cable in front, we obtain the following 6-braid representing the link $T(6,6k)=T(6,12l+6)$:
$$(ace)^{4l+2}(dced(bacb)^4dced(bacb)^3(dced(bacb)^5)^{l-1})^2.$$
The easiest way to check that the framing $(ace)^{4l+2}$ is indeed correct is by computing the total number of crossings, which should coincide with the crossing number $c(T(6,12l+6))=60l+30$. The precise location of the framing is not relevant; in particular, we may slide it along the core link $T(3,6l+3)$ and distribute it right after the brackets $(bacb)^5$. As a result, after smoothing a bounded number of crossings by saddle moves ($90$, to be precise), the above braid can be transformed into the braid
$$\beta=(dced(bacb)^5a^3c^3)^{2l-2}.$$

Now comes the second step: 
The braid~$\beta$ is easily identified as
$$(dced(bacb)^{-1}(bacb)^6a^3c^3)^{2l-2}=(dced(bacb)^{-1}a^{-3}c^{-3}(abc)^{12})^{2l-2},$$ 
since the $4$-braid $(bacb)^6$ is a $2$-cable of the $2$-braid $a^6$.

Thanks to Lemma~\ref{fourstrand}, the braid~$\beta$ can be reduced to the braid
$$\alpha=(dced(bacb)^{-1})^{2l-2}$$
by a sequence of $10 \cdot (2l-2)$ negative $t_3$-moves. As stated just before Lemma~\ref{fourstrand}, the two links $\hat{\beta}$ and $\hat{\alpha} \# 3_1^{20l-20}$ are thus related by a sequence of $20l-20$ saddle moves. Moreover, the link $\hat{\alpha}$ can be transformed into the a smoothly slice knot by a constant number of saddle moves, about ten in number. Indeed, after five suitable saddle moves, the link $\hat{\alpha}$ turns into the connected sum of links $L \# L$, where~$L$ is the closure of the braid $(dced(bacb)^{-1})^{l-1}$, see Figure~2. The latter is isotopic to its mirror image, so $L \# L$ is smoothly concordant to the trivial link with six components. Another five saddle moves transform the latter into the trivial knot.
\begin{figure}[htbp]
\includegraphics[scale=1.1, bb=208 556 386 712]{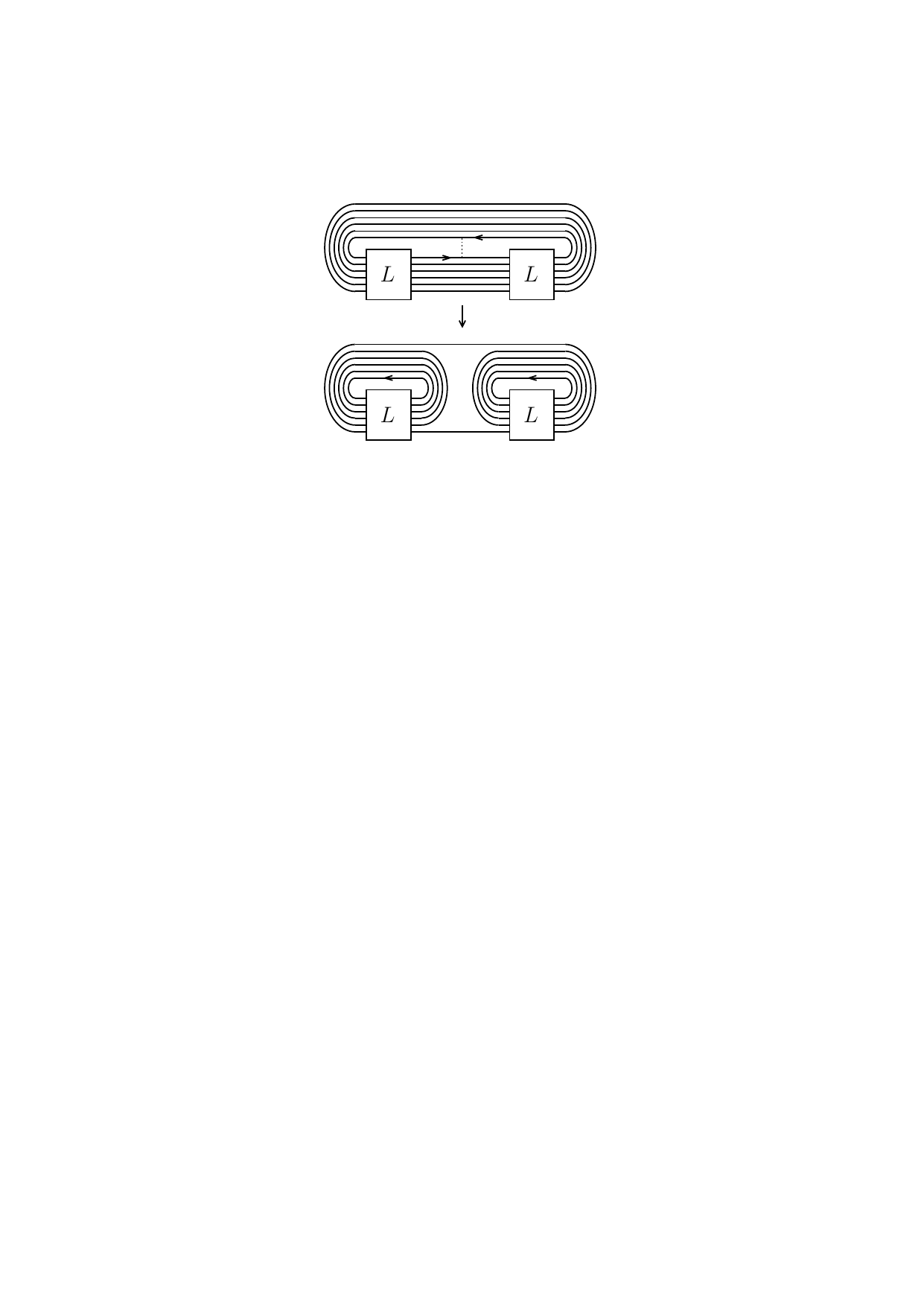}
\caption{Five saddle moves}
\label{figure2}
\end{figure}
As a consequence, the original link $T(6,12l+6)$ can be transformed into the connected sum of trefoil knots $3_1^{20l}$ by a sequence of about $20l$ saddle moves and link concordances, up to a bounded error. Keeping in mind $m=6k=12l+6$ and $n=10k=20l+10$, we get indeed
\begin{equation*}
\begin{split}
d_\chi(T(6,m),3_1^n) & =20l+C(m,n) \\
 & =2n-\frac{5}{3}m+10+C(m,n) \\
 & =\sigma_6(3_1^n)-\sigma_6(T(6,m))+E(m,n), \\
\end{split}
\end{equation*}
with globally bounded error terms $C(m,n),E(m,n)$.
\end{proof}

The above proof produces an explicit upper bound smaller than $200$ on the error term $E(n,m)$; this is far from optimal since we tried to keep the argument short.

\section{Twisting torus links}

The proof of Theorem~\ref{trefoil} relies on McCoy's twisting method~\cite{MC}. A null-homologous twist is an operation on oriented links that takes place around a disc that intersects an even number of strands of a link transversely, with equally many strands going in either direction. A positive (resp. negative) twist inserts a positive (resp. negative) full twist into these strands. As an example, the torus link $T(2k,2k)$ is related to the disjoint union of two torus links of type $T(k,2k)$ by a single negative twist. A special case of Theorem~1 in~\cite{MC} states that if an oriented knot~$K$ can be transformed into the trivial knot by a sequence of~$t$ positive and~$t$ negative null-homologous twists, then $g_4(K) \leq t$. It is the combination of the positive and negative twists that allows us to prove the following lemma, which is the second key ingredient in the proof of Theorem~\ref{trefoil}.

\begin{lemma}
\label{twisting}
For all $k,l \in \N$ coprime and $t \geq \frac{1}{2}(k-1)(l-1)$:
$$d_\chi(T(6k,6l),T(6,6kl) \# 3_1^t) \leq 2t+10.$$
\end{lemma}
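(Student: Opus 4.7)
The plan is to construct the cobordism by viewing $T(6k,6l)$ as the 6-cable of the torus knot $T(k,l)$, with the torus framing inherited from the standard Heegaard torus, and using McCoy's null-homologous twisting method on the core. Under this description, $T(6,6kl)$ is the 6-cable of the unknot with analogous framing, so transforming the core $T(k,l)$ into the unknot will take $T(6k,6l)$ close to $T(6,6kl)$, with the trefoils $3_1^t$ accounting for a framing discrepancy.

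Concretely, I would first produce a sequence of $t$ positive and $t$ negative null-homologous twists on $T(k,l)$ that transforms it into the unknot, each twist acting on a disk meeting the core in $2$ oppositely-oriented strands. The existence of such a balanced twist sequence is consistent with the smooth $4$-genus $\frac{(k-1)(l-1)}{2} \leq t$ of $T(k,l)$, and exactly this kind of balanced sequence is what makes McCoy's method effective here. Each such twist on $T(k,l)$ lifts to a null-homologous twist on the 6-cable around the same disk, which now meets the cable in $12$ strands (six oriented each way, obtained from the two core strands by cabling). Applying McCoy's theorem in this lifted setting, the balanced sequence of $2t$ twists yields a cobordism contributing at most $2t$ $1$-handles between $T(6k,6l)$ and a 6-cable of the unknot carrying a specific framing determined by the twist sequence.

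The final step is to match this 6-cable of the unknot with $T(6,6kl) \# 3_1^t$. The framing of the resulting cable differs from the torus framing that defines $T(6,6kl)$, and this discrepancy should be absorbed by the connected sum with $3_1^t$ on one cable component, together with a bounded number (at most $10$) of additional saddle moves that reconcile component count and boundary structure. The main obstacle I expect is the framing accounting: verifying that the discrepancy corresponds precisely to $t$ trefoil summands (rather than some other knot or a different count) and that only a constant number of corrective saddles is required. This demands a careful linking-number analysis propagated through the twist sequence, tracking how the torus framing of the cable evolves under each null-homologous twist on the core. Once this is established, assembling the bound yields $d_\chi(T(6k,6l),T(6,6kl) \# 3_1^t) \leq 2t+10$.
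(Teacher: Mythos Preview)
Your proposal has the right starting point---viewing $T(6k,6l)$ as the $6$-cable of $T(k,l)$---but the core mechanism is misidentified, and McCoy's theorem is not being applied as it actually reads.

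First, there is no balanced twist sequence on $T(k,l)$ to invoke. The natural unknotting of $T(k,l)$ uses $t=\tfrac{1}{2}(k-1)(l-1)$ \emph{negative} crossing changes only; the existence of a balanced sequence ($t$ positive and $t$ negative) is neither obvious nor what is needed. Moreover, lifting any such sequence to the $6$-cable leaves you with a \emph{link}, not a knot, and the form of McCoy's theorem used here bounds the $4$-genus of a \emph{knot} that can be \emph{unknotted} by a balanced twist sequence; it does not assert that ``$2t$ balanced twists cost $2t$ handles'' between two arbitrary links.

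Second, there is no framing discrepancy to absorb. Null-homologous twists preserve pairwise linking numbers, so lifting the $t$ negative crossing changes on $T(k,l)$ to the cable already carries $T(6k,6l)$ exactly to $T(6,6kl)$. The trefoils do not arise from framing; in the paper's argument they are the device that \emph{creates} the balance required by McCoy. Concretely, one passes to the $0$-framed $(6,1)$-cable $K$ of $T(k,l)$, which is a knot and is unknotted by the same $t$ negative null-homologous twists. Then $K \# 3_1^{-t}$ is unknotted by $t$ negative twists (on the $K$ part) together with $t$ positive twists (each removing one negative trefoil summand), so McCoy yields $g_4(K \# 3_1^{-t}) \le t$, i.e.\ $d_\chi(K,3_1^{t}) \le 2t$. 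Ten saddle moves then relate $T(6k,6l)$ to $T(6,6kl)\# K$, and the stated bound follows by the triangle inequality. The missing idea in your sketch is precisely this: the trefoils are introduced to balance an intrinsically one-sided twist sequence so that McCoy's theorem applies to a knot, not to repair a framing.
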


There is an ambiguity in the meaning of the direct sum $T(6,6kl) \# 3_1^t$ in the above statement; we use the convention where all the trefoil summands are attached to the same component of the link $T(6,6kl)$.

\begin{proof}[Proof of Lemma~\ref{twisting}]
We start by observing that the link $T(6k,6l)$ is a $6$-cable of the torus knot $T(k,l)$ with framing $kl$. Indeed, all components of $T(6k,6l)$ have pairwise linking number $kl$. The knot $T(k,l)$ can be transformed into the trivial knot by a sequence of $t=\frac{1}{2}(k-1)(l-1)$ negative crossing changes. As a consequence, the link $T(6k,6l)$ can be transformed into the $kl$-framed $(6,0)$-cable of the trivial knot, i.e. into the torus link $T(6,6kl)$, by a sequence of~$t$ negative null-homologous twists (compare Section~5 in~\cite{MC}). In order to apply McCoy's $4$-genus bound, we need to consider knots rather than links. Let $K$ be the $0$-framed $(6,1)$-cable of the knot $T(k,l)$. By definition, the knot~$K$ is represented by the braid
$$(abcde)^{-1-6kl} \delta \in B_{6k},$$
where $\delta \in B_{6k}$ is the standard braid representing the torus link $T(6k,6l)$, and $a,b,c,d,e$ denote the first five standard generators of the braid group $B_{6k}$. Moreover, the knot~$K$ can be transformed into the trivial knot by a sequence of~$t$ negative null-homologous twists. In turn, the knot $K \# 3_1^{-t}$ can be transformed into the trivial knot by a sequence of~$t$ negative and~$t$ positive null-homologous twists, since we can remove one negative trefoil summand with each positive twist. As a consequence
$g_4(K \# 3_1^{-t}) \leq t$, hence
$$d_\chi(K,3_1^t) \leq 2t.$$
We are nearly done, since the link $T(6k,6l)$ and the link~$T(6,6kl) \# K$ are related by a sequence of just $10$ saddle moves:
\begin{equation*}
\begin{split}
d_\chi(T(6k,6l), T(6,6kl) \#  3_1^t) & \leq d_\chi(T(6,6kl) \# K,T(6,6kl) \#  3_1^t)+10 \\
 & = d_\chi(K,3_1^t)+10 \\
 &\leq 2t+10.
\qedhere
\end{split}
\end{equation*}
\end{proof}

Before we prove Theorem~\ref{trefoil}, we invoke again the formula of Gambaudo and Ghys for $\sigma_6(T(m,n))$ (Proposition 5.2 in~\cite{GG}). Their formula holds in fact for a homogenised version of the Levine-Tristram invariant denoted by $\text{Sign}_{e^{\frac{2 \pi i}{6}}}$. By Remark~1 in~\cite{GG}, the restriction of the latter to the braid group $B_m$ differs from the invariant $\sigma_{e^{\frac{2 \pi i}{6}}}$, and thus from our limit invariant $\sigma_6$, by a bounded error of size at most $2m$ (two times the braid index). We obtain the following estimate from their formula, valid for all $m$ divisible by six:
$$|\sigma_6(T(m,n))-\frac{5}{18}mn| \leq 2m.$$
Since we allow for an affine error in $m$ and $n$, we may use the approximate formula $\sigma_6(T(m,n)) \approx \frac{5}{18}mn$ for all $m,n \in \N$.

\begin{proof}[Proof of Theorem~\ref{trefoil}]
Let $m,n \in \N$. We may replace the link $T(m,n)$ by a link of the form $T(6k,6l)$ with $|m-6k| \leq 3$, $|n-6l| \leq 3$. This changes the value of $\sigma_6(T(m,n))$ and $d_\chi(T(m,n),3_1^N)$ by $3(m+n)$, at most. Therefore, in order to prove Theorem~\ref{trefoil}, we need to construct a connected cobordism with Euler characteristic of absolute value about $2N-\frac{5}{18}mn=2N-10kl$ between the two links $T(6k,6l)$ and $3_1^N$, for all $N \geq \frac{7}{24}mn=\frac{21}{2}kl$. For simplicity, we assume that $k,l$ are coprime. The general case is just a variation on this:
if $k,l$ are not coprime, we can transform the link $T(k,l)$ into a positive braid knot by smoothing at most $k$ crossings. As a consequence, the link $T(6k,6l)$ can be transformed into a $6$-cable of a positive braid knot by a sequence of at most $36k$ saddle moves.

We are finally in the position to put together the two main steps of the argument.
First, by Lemma~\ref{twisting},
$$d_\chi(T(6k,6l),T(6,6kl) \# 3_1^t) \leq 2t+10,$$
for all $t \geq \frac{1}{2}(k-1)(l-1)$.
Second, by Proposition~\ref{sixstrand},
$$d_\chi(T(6,6kl),3_1^n) \approx \sigma_6(3_1^n)-\sigma_6(T(6,6kl)) \approx 2n-10kl,$$
up to a globally bounded error term, for all $n \geq 10kl$.
Putting these two bounds together, and setting $N=t+n$ with $t \geq \frac{1}{2}kl$ and  $n \geq 10kl$, we obtain
\begin{equation*}
\begin{split}
d_\chi(T(6k,6l),3_1^N) &\leq d_\chi(T(6k,6l),T(6,6kl) \# 3_1^t)+d_\chi(T(6,6kl) \# 3_1^t,3_1^N) \\
 &=d_\chi(T(6k,6l),T(6,6kl) \# 3_1^t)+d_\chi(T(6,6kl),3_1^n)\\
 &\leq 2t+10+2n-10kl \approx 2N-10kl, \\
\end{split}
\end{equation*}
up to a globally bounded error term, for all $N \geq \frac{21}{2}kl$, as required.
\end{proof}

\section{A lower bound on clover invariants}

We consider a clover invariant, i.e. an additive link invariant~$\rho$ satisfying $\rho(3_1)=2$ and $|\rho(L_1)-\rho(L_2)| \leq d_\chi(L_1,L_2)$, for all links~$L_1,L_2$. The second property together with Theorem~\ref{trefoil} implies for all $N \geq \frac{7}{24}mn$:

\begin{equation*}
\begin{split}
|\rho(T(m,n))-\rho(3_1^N)| &\leq d_\chi(T(m,n),3_1^N) \\
 &\leq 2N-\sigma_6(T(m,n))+am+bn+c \\
 &\leq 2N-\frac{5}{18}mn+Am+Bn+C, \\
\end{split}
\end{equation*}
for suitable constants $A,B,C>0$. The last inequality holds thanks to the formula by Gambaudo and Ghys discussed in the paragraph after Theorem~\ref{trefoil}. This concludes the proof of Corollary~\ref{clover}, since the normalisation $\rho(3_1^N)=2N$ implies
$$\rho(T(m,n)) \geq \frac{5}{18}mn-Am-Bn-C.$$


\begin{thebibliography}{99}

\bibitem{B}
     S.~Baader: \emph{Scissor equivalence of tours links},
Bull. London Math. Soc.~44 (2020), no.~5, 1068--1078.

\bibitem{BBL}
     S.~Baader, I.~Banfield, L.~Lewark: \emph{Untwisting 3-strand torus knots},
Bull. Lond. Math. Soc.~52 (2020), no.~3, 429--436.


\bibitem{BR}
     S.~Baader, L.~Ryffel: \emph{Trefoils and hexafoils in 3-braids}, arXiv:2310.11836.


\bibitem{Cox}
     H.~S.~M.~Coxeter: \emph{Factor groups of the braid groups}, Proceedings of the Fourth Can. Math. Cong. (1957), 95--122.

\bibitem{GG}
     J.-M.~Gambaudo, \'{E}.~Ghys: \emph{Braids and signatures}, Bull. Soc. Math. France~133 (2005), no.~4, 541--579.


\bibitem{GS}
     G.-M.~Greuel, E.~Shustin: \emph{Plane algebraic curves with prescribed singularities}, Handbook of geometry and topology of singularities II, Springer, Cham, 2021, 67--122. 

\bibitem{Lef}
     S.~Lefschetz: \emph{On the existence of loci with given singularities}, Trans. Amer. Math. 
Soc.~14 (1913), no.~1, 23--41. 

\bibitem{Lev}
     J.~Levine: \emph{Knot cobordism groups in codimension two}, Comment. Math. Helv.~44 (1969), 229--244.

\bibitem{MC}
     D.~McCoy: \emph{Null-homologous twisting and the algebraic genus},
MATRIX Book Series~4, Springer, Cham, 2021, 147--165.

\bibitem{P}
     M.~Powell: \emph{The four-genus of a link, Levine-Tristram signatures and satellites}, Journal of Knot Theory and Its Ramifications~26 (2017), no.~2.

\bibitem{T}
     A.~G.~Tristram: \emph{Some cobordism invariants for links},
Proc. Cambridge Philos. Soc.~66 (1969), 251--264. 

\end{thebibliography}
\end{document}